\documentclass[12pt, reqno]{amsart}
\usepackage{amsmath, amsthm, amscd, amsfonts, amsthm, amssymb, graphicx}
\usepackage[bookmarksnumbered, colorlinks, plainpages]{hyperref}

\input{mathrsfs.sty}

\setcounter{page}{1}

\setlength{\textheight}{21.6cm}

\setlength{\textwidth}{14cm}

\setlength{\oddsidemargin}{1cm}

\setlength{\evensidemargin}{1cm}

\newtheorem{theorem}{Theorem}[section]

\newtheorem{corollary}[theorem]{Corollary}
\theoremstyle{definition}

\numberwithin{equation}{section}
\newcommand{\la}{\left\langle}
\newcommand{\ra}{\right\rangle}

\begin{document}

\title{Bellman inequality for Hilbert space operators}

\author[A. Morassaei, F. Mirzapour, M.S. Moslehian]{A. Morassaei$^1$, F. Mirzapour$^1$ and M. S. Moslehian$^2$}
\vspace{-2cm}
\address{$^{1}$ Department of Mathematics, Faculty of Sciences, University of Zanjan, P. O. Box 45195-313, Zanjan, Iran.}

\email{morassaei@znu.ac.ir\\f.mirza@znu.ac.ir}

\address{$^{2}$ Department of Pure Mathematics, Center of Excellence in Analysis on Algebraic Structures (CEAAS), Ferdowsi University of Mashhad, P. O. Box 1159, Mashhad 91775, Iran.}

\email{moslehian@ferdowsi.um.ac.ir\\moslehian@ams.org}
\urladdr{\url{http://profsite.um.ac.ir/~moslehian/}}

\dedicatory{Dedicated to Professors A. Berman, M. Goldberg and R. Loewy}

\subjclass[2010]{15A42, 46L05, 47A63, 47A30.}

\keywords{Bellman inequality, Operator arithmetic mean, Operator concave, Operator decreasing, Positive linear functional.}

\begin{abstract}
We establish some operator versions of Bellman's inequality. In particular, we prove that if $\Phi: \mathbb{B}(\mathscr{H}) \to \mathbb{B}(\mathscr{K})$ is a unital positive linear map, $A,B \in \mathbb{B}(\mathscr{H})$ are contractions, $p>1$ and $0 \leq \lambda \leq 1$, then
\begin{eqnarray*}
\big(\Phi(I_\mathscr{H}-A\nabla_{\lambda}B)\big)^{1/p}\ge\Phi\big((I_\mathscr{H}-A)^{1/p}\nabla_{\lambda}(I_\mathscr{H}-B)^{1/p}\big).
\end{eqnarray*}
\end{abstract} \maketitle

\section{Introduction and preliminaries}

\noindent
Bellman \cite{BB2} proved that if  $n, p$ are positive integers and $A,B,a_k,b_k$ $(1\le k\le n)$ are positive real numbers such that $A^p\ge\sum_{k=1}^na_k^p$ and $B^p\ge\sum_{k=1}^nb_k^p$, then
\begin{eqnarray}\label{m0}
\left(A^p-\sum_{k=1}^na_k^p\right)^{1/p}+\left(B^p-\sum_{k=1}^nb_k^p\right)^{1/p}\le
\left((A+B)^p-\sum_{k=1}^n(a_k+b_k)^p\right)^{1/p}
\end{eqnarray}
(see also \cite{BB1, MPF}). A ``multiplicative'' analogue of this inequality is due to J. Acz\'el, see \cite{Moslehian}. During the last decades several generalizations, refinements and applications of the Ballman inequality in various settings have been given and some results related to integral inequalities are presented. The interested reader is referred to \cite{1, 2, 3} and references therein. There is an area of research in which one seeks some operator or norm versions of some classical real inequalities. Such inequalities may hold for operators acting on a Hilbert space of finite or infinite dimensional. This is based on the fact that the self-adjoint operators (Hermitian matrices) can be regarded as a generalization of the real numbers. Among a lot of results concerning with operator or norm inequalities in the literature we would like to point out some of papers of M. Goldberg \cite{G1, G2}. In the present paper, we aim to establish some operator versions of the Bellman inequality involving the arithmetic means and positive linear maps and give some applications.

Throughout the paper, let $\mathbb{B}(\mathscr{H})$ denote the algebra of all bounded linear operators acting on a complex
Hilbert space $(\mathscr{H},\la \cdot,\cdot\ra)$, $\mathbb{B}_h(\mathscr{H})$ denote the algebra of all self-adjoint operators in $\mathbb{B}(\mathscr{H})$ and $I$ is the identity operator. In the case when
$\dim \mathscr{H} = n$, we identify $\mathbb{B}(\mathscr{H})$ with the full matrix algebra $\mathcal{M}_n(\mathbb{C})$ of all $n\times n$
matrices with entries in the complex field. An  operator $A\in\mathbb{B}_h(\mathscr{H})$ is called
positive (positive-semidefinite for matrices) if $\la Ax,x\ra\ge0$ holds for every $x\in \mathscr{H}$
and then we write $A\ge0$. For $A,B\in\mathbb{B}_h(\mathscr{H})$, we say $A\le B$ if $B-A \ge 0$. Let
$f$ be a continuous real valued function defined on an interval $J$. The function
$f$ is called operator decreasing if $B\le A$ implies $f(A)\le f(B)$ for all $A,B$ with
spectra in $J$. A function $f$ is said to be operator concave on $J$ if
$$
\lambda f(A) + (1 - \lambda)f(B)\le f(\lambda A + (1 - \lambda)B)
$$
for all $A,B\in\mathbb{B}_h(\mathscr{H})$ with spectra in $J$ and all $\lambda\in[0,1]$.
A map $\Phi:\mathbb{B}(\mathscr{H})\to\mathbb{B}(\mathscr{K})$ is called positive if $\Phi(A)\geq 0$ whenever $A\geq 0$ and is said to be normalized if $\Phi(I_\mathscr{H})=I_\mathscr{K}$.
We denote by $\mathbf{P}[\mathbb{B}(\mathscr{H}),\mathbb{B}(\mathscr{K})]$ the set of all positive linear maps $\Phi:\mathbb{B}(\mathscr{H})\to\mathbb{B}(\mathscr{K})$ and by $\mathbf{P}_N[\mathbb{B}(\mathscr{H}),\mathbb{B}(\mathscr{K})]$ the set of all normalized positive linear maps $\Phi\in\mathbf{P}[\mathbb{B}(\mathscr{H}),\mathbb{B}(\mathscr{K})]$. If $\Phi \in \mathbf{P}_N[\mathbb{B}(\mathscr{H}),\mathbb{B}(\mathscr{K})]$ and $f$ is an operator concave function on an interval $J$, then
\begin{equation}\label{jen}
f(\Phi(A))\ge\Phi(f(A))\,\quad\quad(\mbox{Davis-Choi-Jensen's Inequality})
\end{equation}
for every self-adjoint operator $A$ on $\mathscr{H}$, whose spectrum is contained in $J$, see \cite{msm}. If $\Phi \in \mathbf{P}_N[\mathbb{B}(\mathscr{H}),\mathbb{B}(\mathscr{K})]$, $0\le r\le1$ and $A$ is a positive invertible operator, then
\begin{equation}\label{pmi}
\Phi(A^r)\le\Phi(A)^r\quad\quad\quad(\mbox{H\"older-McCarthy type Inequality}).
\end{equation}
For more information see \cite[Chapters 1 and 5]{FMPS}.


\section{Main results}

We start this section with recalling that for positive operators $A, B$ and a real number $0 \leq \lambda \leq 1$, the arithmetic mean $A \nabla_\lambda B$ is defined by $(1-\lambda )A+\lambda B$, see \cite[Chapter V]{FMPS}. We denote $A\nabla_{1/2} B$ by $A\nabla B$. We first present a Bellman type operator inequality involving positive linear maps and operator concave functions.
\begin{theorem}\label{p1}
Suppose that $\Phi \in
\mathbf{P}_N[\mathbb{B}(\mathscr{H}),\mathbb{B}(\mathscr{K})]$, $f:
J\to(0,\infty)$ is an operator concave function, where $J$ is an
interval of $(0,\infty)$ and $A,B \in\mathbb{B}(\mathscr{H})$ are
positive operators with spectra contained in $J$. Then
\begin{eqnarray}\label{mp4}
f\big(\Phi(A)\nabla_{\lambda}\Phi(B)\big)\geq \Phi(f(A))\nabla_{\lambda}\Phi(f(B))\,\quad(0 \leq \lambda \leq 1).
\end{eqnarray}
\end{theorem}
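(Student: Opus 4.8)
The plan is to deduce \eqref{mp4} by chaining two facts already recorded in the excerpt: the operator concavity of $f$, which lets me move $f$ across the arithmetic mean $\nabla_\lambda$, and the Davis--Choi--Jensen inequality \eqref{jen}, which compares $f(\Phi(\cdot))$ with $\Phi(f(\cdot))$. Since $A\nabla_\lambda B=(1-\lambda)A+\lambda B$ is an affine combination with nonnegative coefficients, the map $\Phi$ and the order $\le$ interact with $\nabla_\lambda$ monotonically, so the whole argument reduces to composing three comparisons and no delicate estimate is required.

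First I would apply the defining inequality of operator concavity to the self-adjoint (indeed positive) operators $\Phi(A)$ and $\Phi(B)$. Reading $\lambda f(X)+(1-\lambda)f(Y)\le f\big(\lambda X+(1-\lambda)Y\big)$ with $X=\Phi(B)$ and $Y=\Phi(A)$, so that $\lambda X+(1-\lambda)Y=\Phi(A)\nabla_\lambda\Phi(B)$, yields
\begin{eqnarray*}
f\big(\Phi(A)\nabla_\lambda\Phi(B)\big)\ge f(\Phi(A))\nabla_\lambda f(\Phi(B)).
\end{eqnarray*}

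Next I would invoke \eqref{jen} separately for $A$ and for $B$, obtaining $f(\Phi(A))\ge\Phi(f(A))$ and $f(\Phi(B))\ge\Phi(f(B))$. Multiplying these by the nonnegative scalars $1-\lambda$ and $\lambda$ respectively and adding preserves the order, giving
\begin{eqnarray*}
f(\Phi(A))\nabla_\lambda f(\Phi(B))\ge\Phi(f(A))\nabla_\lambda\Phi(f(B)),
\end{eqnarray*}
and concatenating this with the previous display produces \eqref{mp4}.

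The single point that genuinely needs care---and which I regard as the main obstacle---is confirming that $f(\Phi(A))$ and $f(\Phi(B))$ are even defined, i.e.\ that the spectra of $\Phi(A)$ and $\Phi(B)$ lie in $J$, since both the concavity step and \eqref{jen} presuppose this. This follows because positive linear maps are order preserving: if $mI\le A\le MI$ with $[m,M]\subseteq J$ (such an interval exists because $J$ is an interval containing the compact spectrum of $A$), then normalization gives $mI_\mathscr{K}\le\Phi(A)\le MI_\mathscr{K}$, so the spectrum of $\Phi(A)$ stays in $[m,M]\subseteq J$, and symmetrically for $B$. Once this containment is recorded, the two invoked inequalities apply verbatim and the proof closes.
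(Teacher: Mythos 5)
Your proof is correct, and it takes a route that is recognizably different from the paper's even though both rest on the same two ingredients (operator concavity of $f$ and the Davis--Choi--Jensen inequality \eqref{jen}). The paper first uses linearity to write $f\big(\Phi(A)\nabla_\lambda\Phi(B)\big)=f\big(\Phi(A\nabla_\lambda B)\big)$, applies \eqref{jen} \emph{once} to the single operator $A\nabla_\lambda B$, then applies operator concavity on the domain side, $f(A\nabla_\lambda B)\ge f(A)\nabla_\lambda f(B)$, and pushes that inequality through $\Phi$ using its positivity and linearity; the intermediate term is $\Phi\big(f(A\nabla_\lambda B)\big)$. You instead apply concavity on the codomain side, to $\Phi(A)$ and $\Phi(B)$ directly, and then invoke \eqref{jen} \emph{twice}, once for $A$ and once for $B$, combining with the nonnegative weights $1-\lambda$ and $\lambda$; your intermediate term is $f(\Phi(A))\nabla_\lambda f(\Phi(B))$. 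The trade-off is minor: the paper's order of steps leans on the order-preservation of $\Phi$ applied to the concavity inequality, while yours requires checking that $\sigma(\Phi(A))$ and $\sigma(\Phi(B))$ lie in $J$ before $f(\Phi(A))$ and $f(\Phi(B))$ make sense --- a point you handle correctly via $mI\le A\le MI\Rightarrow mI_{\mathscr K}\le\Phi(A)\le MI_{\mathscr K}$ for unital positive $\Phi$, and one the paper leaves implicit (it is in any case needed for the left-hand side of \eqref{mp4} to be defined). Both arguments are complete and of essentially equal length.
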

\begin{proof}
Assume that $0 \leq \lambda \leq 1$, $f: J\to(0,\infty)$ is operator concave and $A,B \in\mathbb{B}(\mathscr{H})$. It follows from the linearity of $\Phi$ that
\begin{align*}
f\big(\Phi(A)\nabla_{\lambda}\Phi(B)\big)&=f\big(\Phi(A\nabla_{\lambda}B)\big)\\
&\ge\Phi\big(f(A\nabla_{\lambda}B)\big)\qquad\qquad\qquad\qquad\qquad\qquad\quad\qquad(\mbox{by~} \eqref{jen})\\
&\ge\Phi\big(f(A)\nabla_{\lambda}f(B)\big)\qquad\quad\quad(\mbox{by the operator concavity of~} f)\\
&=\Phi(f(A))\nabla_{\lambda}\Phi(f(B))\,.
\end{align*}
\end{proof}

\begin{corollary}[Operator Bellman Inequality]
If $\Phi\in \mathbf{P}_N[\mathbb{B}(\mathscr{H}),\mathbb{B}(\mathscr{K})]$, $A,B$ are contractions, $p>1$ and $0 \leq \lambda \leq 1$, then
\begin{eqnarray}\label{mp5}
\big(\Phi(I_\mathscr{H}-A\nabla_{\lambda}B)\big)^{1/p}\ge\Phi\big((I_\mathscr{H}-A)^{1/p}\nabla_{\lambda}(I_\mathscr{H}-B)^{1/p}\big).
\end{eqnarray}
\end{corollary}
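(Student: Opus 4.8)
The plan is to deduce the corollary directly from Theorem \ref{p1} by choosing the operator concave function $f(t)=t^{1/p}$ and applying the theorem to the positive operators $I_\mathscr{H}-A$ and $I_\mathscr{H}-B$ in place of $A$ and $B$.

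First I would record that $f(t)=t^{1/p}$ satisfies the hypotheses of Theorem \ref{p1} with $J=(0,\infty)$: since $p>1$ we have $0<1/p<1$, and the power function $t\mapsto t^{r}$ is operator concave on $(0,\infty)$ for every $r\in(0,1)$ and maps $(0,\infty)$ into $(0,\infty)$ (a well-known consequence of L\"owner's theory of operator monotone functions). Next I would check that the substituted operators are admissible. As $A$ and $B$ are contractions, $A\le I_\mathscr{H}$ and $B\le I_\mathscr{H}$, whence $I_\mathscr{H}-A\ge 0$ and $I_\mathscr{H}-B\ge 0$.

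With these preparations, applying \eqref{mp4} to $I_\mathscr{H}-A$ and $I_\mathscr{H}-B$ gives
\begin{equation*}
\big(\Phi(I_\mathscr{H}-A)\nabla_{\lambda}\Phi(I_\mathscr{H}-B)\big)^{1/p}\ge\Phi\big((I_\mathscr{H}-A)^{1/p}\big)\nabla_{\lambda}\Phi\big((I_\mathscr{H}-B)^{1/p}\big),
\end{equation*}
and it remains to rewrite both sides using linearity and normalization of $\Phi$. On the left, since $\Phi(I_\mathscr{H})=I_\mathscr{K}$ and $(I_\mathscr{H}-A)\nabla_\lambda(I_\mathscr{H}-B)=I_\mathscr{H}-(A\nabla_\lambda B)$, one obtains $\Phi(I_\mathscr{H}-A)\nabla_\lambda\Phi(I_\mathscr{H}-B)=\Phi\big((I_\mathscr{H}-A)\nabla_\lambda(I_\mathscr{H}-B)\big)=\Phi\big(I_\mathscr{H}-A\nabla_\lambda B\big)$. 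On the right, linearity of $\Phi$ yields $\Phi\big((I_\mathscr{H}-A)^{1/p}\big)\nabla_\lambda\Phi\big((I_\mathscr{H}-B)^{1/p}\big)=\Phi\big((I_\mathscr{H}-A)^{1/p}\nabla_\lambda(I_\mathscr{H}-B)^{1/p}\big)$. Substituting these two identities turns the displayed inequality into exactly \eqref{mp5}.

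The only genuinely delicate point is that Theorem \ref{p1} demands the spectra of $I_\mathscr{H}-A$ and $I_\mathscr{H}-B$ to lie strictly inside $(0,\infty)$, which may fail when $1$ belongs to the spectrum of $A$ or $B$ (so that $I_\mathscr{H}-A$ is merely positive, not invertible). I expect this to be the main obstacle. I would handle it by first proving the inequality for strict contractions, where $I_\mathscr{H}-A$ and $I_\mathscr{H}-B$ are positive invertible, and then recovering the general case by applying the established inequality to $(1-\varepsilon)A$ and $(1-\varepsilon)B$ and letting $\varepsilon\to 0^{+}$, using continuity of the continuous functional calculus together with the fact that the order $\ge$ is preserved under such limits. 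Apart from this limiting argument, the corollary is a direct specialization of Theorem \ref{p1}.
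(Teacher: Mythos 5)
Your proof is correct, and it takes a cleaner route than the paper's. You apply Theorem \ref{p1} once, with $f(t)=t^{1/p}$ and the substituted operators $I_\mathscr{H}-A$, $I_\mathscr{H}-B$, and then the corollary falls out by linearity and unitality of $\Phi$ alone. The paper instead keeps $A$ and $B$ as the arguments and uses the function $f(t)=(1-t)^{1/p}$: it first invokes the operator concavity of this $f$ to get $\bigl(I_\mathscr{K}-\Phi(A)\nabla_{\lambda}\Phi(B)\bigr)^{1/p}\ge\bigl(I_\mathscr{K}-\Phi(A)\bigr)^{1/p}\nabla_{\lambda}\bigl(I_\mathscr{K}-\Phi(B)\bigr)^{1/p}$, and then needs a second, separate ingredient, the H\"older--McCarthy type inequality \eqref{pmi}, to pass from $\bigl(\Phi(I_\mathscr{H}-A)\bigr)^{1/p}$ down to $\Phi\bigl((I_\mathscr{H}-A)^{1/p}\bigr)$. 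The two arguments rest on the same underlying facts (operator concavity of the power function and Choi--Davis--Jensen, which is what \eqref{pmi} amounts to), but your packaging absorbs both into the single use of Theorem \ref{p1}, which is what one would hope for from a statement labelled a corollary. You also flag and repair a point the paper silently ignores: Theorem \ref{p1} asks for spectra in an interval $J\subseteq(0,\infty)$, while for a contraction the spectrum of $I_\mathscr{H}-A$ may contain $0$; your $\varepsilon$-perturbation $A\mapsto(1-\varepsilon)A$ (which forces $I_\mathscr{H}-(1-\varepsilon)A\ge\varepsilon I_\mathscr{H}$) followed by a norm limit is a legitimate fix, and note that the paper's own choice $f(t)=(1-t)^{1/p}$ on $(0,1)$ suffers from the same endpoint issue at both $0$ and $1$. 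The one implicit assumption you share with the paper is that ``contraction'' here must mean positive contraction, so that $I_\mathscr{H}-A\ge0$ makes sense.
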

\begin{proof}
Note that the function $g(t)=t^r$ is operator concave on $(0,\infty)$ if $0\le r\le1$ \cite[Corollary 1.16]{FMPS}, so is the function $f(t)=(1-t)^r$ on $(0,1)$ if $0\le r\le1$. It follows from the linearity and the normality of $\Phi$ that
\begin{align*}
\big(\Phi(I_\mathscr{H}-A\nabla_{\lambda}B)\big)^{1/p}&=\big(I_\mathscr{K}-\Phi(A)\nabla_{\lambda}\Phi(B)\big)^{1/p}\\
&\ge\big(I_\mathscr{K}-\Phi(A)\big)^{1/p}\nabla_{\lambda}\big(I_\mathscr{K}-\Phi(B)\big)^{1/p}\\
&\qquad\qquad\quad\quad\quad\quad\quad\quad\quad\quad(\mbox{by}~\eqref{mp4}~\mbox{for}~ f(t)=(1-t)^r)\\
&=\big(\Phi(I_\mathscr{H}-A)\big)^{1/p}\nabla_{\lambda}\big(\Phi(I_\mathscr{H}-B)\big)^{1/p}\\
&\ge\Phi\left((I_\mathscr{H}-A)^{1/p}\right)\nabla_{\lambda}\Phi\left((I_\mathscr{H}-B)^{1/p}\right)\qquad\quad(\mbox{by}~\eqref{pmi})\\
&=\Phi\big((I_\mathscr{H}-A)^{1/p}\nabla_{\lambda}(I_\mathscr{H}-B)^{1/p}\big)\,.
\end{align*}
\end{proof}

\begin{corollary}
Let $\Phi\in \mathbf{P}_N[\mathbb{B}(\mathscr{H}),\mathbb{B}(\mathscr{K})]$, $A,B$ be contractions and $0 \leq \lambda \leq 1$. Then
$$
\log\big(\Phi(A)\nabla_{\lambda}\Phi(B)\big)\ge\Phi(\log A)\nabla_{\lambda}\Phi(\log B)\,.
$$
\end{corollary}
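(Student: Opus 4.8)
The plan is to obtain this inequality as the special case $f=\log$ of Theorem~\ref{p1}. The logarithm is a standard example of an operator concave function on $(0,\infty)$: it is operator monotone, and operator monotone functions on $(0,\infty)$ are operator concave (see \cite{FMPS}). Since $A$ and $B$ are (positive invertible) contractions, their spectra lie in $(0,1]\subset(0,\infty)$, so $\log A$ and $\log B$ are well defined. Moreover $\Phi(A)$ and $\Phi(B)$ are positive and invertible, because $\Phi$ is positive and normalized, so that $A\ge\varepsilon I_\mathscr{H}$ for some $\varepsilon>0$ forces $\Phi(A)\ge\varepsilon I_\mathscr{K}>0$; hence the convex combination $\Phi(A)\nabla_{\lambda}\Phi(B)$ is positive invertible and its logarithm is meaningful.

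The one point that needs attention is that Theorem~\ref{p1} is stated for functions $f:J\to(0,\infty)$, whereas $\log$ takes nonpositive values on $(0,1]$. However, inspecting the proof of Theorem~\ref{p1}, the positivity of the range of $f$ is never actually used: the chain of inequalities relies only on the linearity of $\Phi$, the Davis--Choi--Jensen inequality \eqref{jen}, and the very definition of operator concavity, all of which are valid for $\log$ on $(0,\infty)$. Thus I would first observe that the hypothesis $f:J\to(0,\infty)$ in Theorem~\ref{p1} can be relaxed to any real-valued operator concave $f$, so that inequality \eqref{mp4} holds verbatim with $f=\log$.

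With that observation in hand, the conclusion follows by direct substitution of $f(t)=\log t$ into \eqref{mp4}:
\begin{align*}
\log\big(\Phi(A)\nabla_{\lambda}\Phi(B)\big)
&= f\big(\Phi(A)\nabla_{\lambda}\Phi(B)\big)\\
&\ge \Phi(f(A))\nabla_{\lambda}\Phi(f(B))\\
&= \Phi(\log A)\nabla_{\lambda}\Phi(\log B),
\end{align*}
which is precisely the asserted inequality. The main (and essentially only) obstacle is the bookkeeping of the previous paragraph, namely confirming that the range restriction on $f$ in Theorem~\ref{p1} is inessential, so that the theorem legitimately applies to the logarithm on the spectra of $A$ and $B$.
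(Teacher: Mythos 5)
Your proof is correct and is essentially identical to the paper's, which simply says ``Put $f(t)=\log t$ in Theorem \ref{p1}.'' Your additional remarks---that the range restriction $f:J\to(0,\infty)$ in Theorem \ref{p1} is never used in its proof, and that the contractions must be positive and invertible for $\log A$, $\log B$ to make sense---are sound and in fact supply bookkeeping the paper glosses over.
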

\begin{proof}
Put $f(t)=\log t$ in Theorem \ref{p1}.
\end{proof}
.
\begin{corollary}
Let $0 \leq \lambda \leq 1$ and $A,B\in\mathbb{B}(\mathscr{H})$ be invertible positive operators. Then
\begin{eqnarray*}
(A\nabla_\lambda B)\log\left(A^{-1}!_\lambda B^{-1}\right)\ge\left(A\log A^{-1}\right)\nabla_\lambda\left(B\log B^{-1}\right)\,.
\end{eqnarray*}
\end{corollary}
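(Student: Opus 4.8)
The plan is to reduce the stated inequality, after some algebraic bookkeeping, to the operator convexity of the function $g(t)=t\log t$ on $(0,\infty)$. First I would unwind the two unfamiliar ingredients. Using the definition of the weighted harmonic mean $X\,!_\lambda\,Y=\big((1-\lambda)X^{-1}+\lambda Y^{-1}\big)^{-1}$, one computes $A^{-1}!_\lambda B^{-1}=\big((1-\lambda)A+\lambda B\big)^{-1}=(A\nabla_\lambda B)^{-1}$, so that $\log\big(A^{-1}!_\lambda B^{-1}\big)=-\log(A\nabla_\lambda B)$. In particular this operator is a function of the single operator $A\nabla_\lambda B$, hence commutes with it, and the left-hand side product is self-adjoint; explicitly the left-hand side equals $-(A\nabla_\lambda B)\log(A\nabla_\lambda B)$.

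Next, since $A$ commutes with $\log A$ (and likewise $B$ with $\log B$), one has $A\log A^{-1}=-A\log A$ and $B\log B^{-1}=-B\log B$, so the right-hand side equals $-\big[(1-\lambda)A\log A+\lambda B\log B\big]$. Putting these together, the asserted inequality is equivalent, after multiplying through by $-1$ (which reverses the order), to
\[
(A\nabla_\lambda B)\log(A\nabla_\lambda B)\le(1-\lambda)A\log A+\lambda B\log B .
\]
This last inequality is precisely the statement that $g(t)=t\log t$ is operator convex on $(0,\infty)$, evaluated at the pair $A,B$ with weight $\lambda$; equivalently it is the content of Theorem \ref{p1} with $\Phi$ the identity map and the operator concave function $f(t)=-t\log t$. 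Since $A,B$ are invertible and positive, their spectra, and that of $A\nabla_\lambda B$, lie in $(0,\infty)$, so the functional calculus is legitimate. The only genuine analytic input is the operator convexity of $t\log t$, which is standard and which I would cite from the reference used throughout, e.g. \cite{FMPS}.

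I expect the main obstacle to be purely the careful sign-and-logarithm bookkeeping rather than any deep inequality. Because $A$ and $B$ need not commute, one cannot rewrite $\log(A\nabla_\lambda B)$ in terms of $\log A$ and $\log B$, so it is essential to observe that both $\log\big(A^{-1}!_\lambda B^{-1}\big)$ and $A\nabla_\lambda B$ are functions of the single operator $A\nabla_\lambda B$; this is what keeps the left-hand side a well-defined self-adjoint product and makes the reduction above valid. Once the reduction to the operator convexity of $t\log t$ has been made, the conclusion is immediate.
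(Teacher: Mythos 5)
Your proof is correct and is essentially the paper's own argument: the paper likewise invokes Theorem \ref{p1} with $\Phi$ the identity and $f(t)=-t\log t$, together with the identity $A^{-1}!_\lambda B^{-1}=(A\nabla_\lambda B)^{-1}$; you have merely written out the sign-and-logarithm bookkeeping that the paper leaves implicit.
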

\begin{proof}
Consider the identity map as $\Phi$ and put $f(t)=-t\log t$ in Theorem \ref{p1} and recall that $A!_\lambda B:=\left(A^{-1}\nabla_\lambda B^{-1}\right)^{-1}$.
\end{proof}

In the last result we establish Bellman's Inequality and a new variant of it.

\begin{theorem}
The following equivalent statements hold:
\begin{enumerate}
\item[(i)]If $n$ is a positive integer, $p>1$, $0 \leq \lambda \leq 1$ and $a_k,b_k$ $(1\le k\le n)$ are positive real numbers such that $1\ge\sum_{k=1}^na_k^p$ and $1\ge\sum_{k=1}^nb_k^p$, then
\begin{eqnarray}\label{mp3}
\left(1-\sum_{k=1}^na_k^p\right)^{1/p}\nabla_{\lambda}\left(1-\sum_{k=1}^nb_k^p\right)^{1/p}\le
\left(1-\sum_{k=1}^n\Big(a_k\nabla_{\lambda}b_k\Big)^p\right)^{1/p}\,.
\end{eqnarray}
\item[(ii)] [Classical Bellman Inequality] If If $n$ is a positive integer, $p>1$ and $M_1,M_2,a_k,b_k$ $(1\le k\le n)$ are positive real numbers such that $M_1^p\ge\sum_{k=1}^na_k^p$ and $M_2^p\ge\sum_{k=1}^nb_k^p$, then
{\footnotesize
\begin{eqnarray}\label{mp1}
\left(M_1^p-\sum_{k=1}^na_k^p\right)^{1/p}+\left(M_2^p-\sum_{k=1}^nb_k^p\right)^{1/p}\le
\left((M_1+M_2)^p-\sum_{k=1}^n(a_k+b_k)^p\right)^{1/p}\,.
\end{eqnarray}}
\end{enumerate}
\end{theorem}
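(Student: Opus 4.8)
The plan is to establish statement (i) directly from Minkowski's inequality and then deduce the equivalence of (i) and (ii) by a homogeneity (scaling) argument, so that verifying either one automatically verifies both. Note that, unlike the earlier corollaries, the sum $\sum_k(\cdot)^p$ sitting inside the $p$-th root is genuinely an $\ell^p$-expression and does not reduce to a diagonal specialization of the Operator Bellman Inequality \eqref{mp5}; this is what points toward a direct Minkowski proof.

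For (i), I would recast the two constraints as $\ell^p$-normalization conditions. Writing $\alpha=\big(1-\sum_k a_k^p\big)^{1/p}$ and $\beta=\big(1-\sum_k b_k^p\big)^{1/p}$, the hypotheses $1\ge\sum_k a_k^p$ and $1\ge\sum_k b_k^p$ say exactly that the nonnegative vectors $u=(\alpha,a_1,\dots,a_n)$ and $v=(\beta,b_1,\dots,b_n)$ in $\mathbb{R}^{n+1}$ satisfy $\|u\|_p=\|v\|_p=1$. Since $p>1$, Minkowski's inequality together with positive homogeneity of the norm gives $\|(1-\lambda)u+\lambda v\|_p\le(1-\lambda)\|u\|_p+\lambda\|v\|_p=1$. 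Reading off the first coordinate of $(1-\lambda)u+\lambda v$ as $\alpha\nabla_{\lambda}\beta$ and the remaining coordinates as $a_k\nabla_{\lambda}b_k$, raising the inequality to the $p$-th power and rearranging yields $(\alpha\nabla_{\lambda}\beta)^p\le 1-\sum_k(a_k\nabla_{\lambda}b_k)^p$, which is precisely \eqref{mp3}.

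Next I would prove (i) $\Leftrightarrow$ (ii) purely by scaling. For (ii) $\Rightarrow$ (i), given $\lambda$ and data for (i), I set $M_1=1-\lambda$, $M_2=\lambda$ and apply \eqref{mp1} to the rescaled numbers $(1-\lambda)a_k$ and $\lambda b_k$; the constraints $\sum_k\big((1-\lambda)a_k\big)^p\le(1-\lambda)^p=M_1^p$ and the analogue for $b_k$ hold because $\sum_k a_k^p,\sum_k b_k^p\le 1$, while the terms $\big(M_i^p-\sum_k(\cdot)^p\big)^{1/p}$ factor as $(1-\lambda)\big(1-\sum_k a_k^p\big)^{1/p}$ and $\lambda\big(1-\sum_k b_k^p\big)^{1/p}$, reproducing \eqref{mp3}. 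Conversely, for (i) $\Rightarrow$ (ii) with $M_1,M_2>0$, I normalize $a_k\mapsto a_k/M_1$, $b_k\mapsto b_k/M_2$ and choose $\lambda=M_2/(M_1+M_2)$; then $a_k\nabla_{\lambda}b_k$ collapses to $(a_k+b_k)/(M_1+M_2)$, and multiplying the resulting instance of \eqref{mp3} through by $M_1+M_2$ recovers \eqref{mp1}. The degenerate cases $M_1=0$ or $M_2=0$ are immediate.

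The arguments are elementary, and I do not expect a genuine obstacle once statement (i) is seen as a triangle inequality in $\ell^p$. The only points needing care are bookkeeping: checking that the rescaled sums still satisfy the required constraints, confirming that the homogeneity factors $M_i$ and the weight $\lambda=M_2/(M_1+M_2)$ come out correctly under the substitutions, and keeping $p\ge 1$ so that Minkowski's inequality points in the required direction.
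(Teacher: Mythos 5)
Your proposal is correct, and the equivalence (i)$\Leftrightarrow$(ii) is handled by exactly the same substitutions the paper uses ($\lambda=M_2/(M_1+M_2)$ with $a_k\mapsto a_k/M_1$, $b_k\mapsto b_k/M_2$ in one direction; $M_1=1-\lambda$, $M_2=\lambda$ with $a_k\mapsto(1-\lambda)a_k$, $b_k\mapsto\lambda b_k$ in the other). Where you genuinely diverge is in the proof of (i): you treat it as the triangle inequality for the $\ell^p$-norm of the augmented vectors $(\alpha,a_1,\dots,a_n)$ and $(\beta,b_1,\dots,b_n)$, which is the classical Minkowski route and is self-contained and elementary. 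The paper instead \emph{does} obtain (i) from a diagonal specialization of the operator Bellman inequality \eqref{mp5} --- contrary to your parenthetical remark --- by applying it with the identity map to the $2\times2$ diagonal matrices with entries $\sum_k a_k^p$ and $\sum_k b_k^p$; this yields only the scalar concavity step $\big(1-\sum_k a_k^p\big)^{1/p}\nabla_\lambda\big(1-\sum_k b_k^p\big)^{1/p}\le\big(1-\sum_k(a_k^p\nabla_\lambda b_k^p)\big)^{1/p}$, after which the convexity of $t^p$ gives $\sum_k(a_k\nabla_\lambda b_k)^p\le\sum_k(a_k^p\nabla_\lambda b_k^p)$ and hence \eqref{mp3}. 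So you are right that \eqref{mp3} is not a \emph{direct} specialization of \eqref{mp5}, but the paper's point is precisely to exhibit the classical inequality as a consequence of the operator version (modulo one elementary convexity step), whereas your argument buys independence from the operator machinery at the cost of not illustrating that connection. Both proofs are valid for all real $p>1$; the only bookkeeping worth noting in your version is that the endpoint cases $\lambda\in\{0,1\}$ of (i) must be checked directly when deducing (i) from (ii), since then $M_1$ or $M_2$ would vanish.
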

\begin{proof}
Let $n, p$ be positive integers, $0 \leq \lambda \leq 1$ and $a_k,b_k$ $(1\le k\le n)$ be positive real numbers such that $1\ge\sum_{k=1}^na_k^p$ and $1\ge\sum_{k=1}^nb_k^p$. Set
$A=\left[\begin{array}{cc}\sum_{k=1}^na_k^p&0\\0&1\end{array}\right]\in \mathcal{M}_2(\mathbb{C})$, $B=\left[\begin{array}{cc}\sum_{k=1}^nb_k^p&0\\0&1\end{array}\right] \in \mathcal{M}_2(\mathbb{C})$. Then
\begin{align*}
\left(I-A\nabla_{\lambda}B\right)^{1/p}&=\left(\left[\begin{array}{cc}1&0\\0&1\end{array}\right]-
\left[\begin{array}{cc}(1-\lambda)\sum_{k=1}^na_k^p+\lambda\sum_{k=1}^nb_k^p&0\\0&1\end{array}\right]\right)^{1/p}\\
&=\left[\begin{array}{cc}\left((1-\sum_{k=1}^na_k^p)\nabla_{\lambda}(1-\sum_{k=1}^nb_k^p)\right)^{1/p}&0\\0&0\end{array}\right]\,.
\end{align*}
and
\begin{align*}
(I-A)^{1/p}\nabla_{\lambda}(I-B)^{1/p}&=(1-\lambda)
\left[\begin{array}{cc}1-\sum_{k=1}^na_k^p&0\\0&0\end{array}\right]^{\frac{1}{p}}+\lambda
\left[\begin{array}{cc}1-\sum_{k=1}^nb_k^p&0\\0&0\end{array}\right]^{\frac{1}{p}}\\
&=\left[\begin{array}{cc}(1-\sum_{k=1}^na_k^p)^{1/p}\nabla_{\lambda}(1-\sum_{k=1}^nb_k^p)^{1/p}&0\\0&0\end{array}\right]\,.
\end{align*}
It follows from \eqref{mp5} with the identity map $\Phi$ that
\begin{align*}
\left(1-\sum_{k=1}^na_k^p\right)^{1/p}\nabla_{\lambda}\left(1-\sum_{k=1}^nb_k^p\right)^{1/p}&\le
\left(\Big(1-\sum_{k=1}^na_k^p\Big)\nabla_{\lambda}\Big(1-\sum_{k=1}^nb_k^p\Big)\right)^{1/p}\\
&=\left(1-\Big(\sum_{k=1}^na_k^p\Big)\nabla_{\lambda}\Big(\sum_{k=1}^nb_k^p\Big)\right)^{1/p}\\
&=\left(1-\sum_{k=1}^n\Big(a_k^p\nabla_{\lambda}b_k^p\Big)\right)^{1/p}\\
&\le\left(1-\sum_{k=1}^n\Big(a_k\nabla_{\lambda}b_k\Big)^p\right)^{1/p}\\
&\quad\quad\mbox{(by the convexity of $t^p$ for $p>1$)}\,,
\end{align*}
which gives \eqref{mp3}.\\
(i)$\Rightarrow$(ii) Set $\lambda=\frac{M_2}{M_1+M_2}$ and replace $a_k, b_k$ by $a_k/M_1, b_k/M_2$, respectively, in \eqref{mp3} to get
\begin{align*}
&\frac{1}{M_1+M_2}\left(M_1^p-\sum_{k=1}^na_k^p\right)^{1/p}+\frac{1}{M_1+M_2}\left(M_2^p-\sum_{k=1}^nb_k^p\right)^{1/p}\\
&\quad=\left(1-\frac{M_2}{M_1+M_2}\right)\left(1-\sum_{k=1}^n\left(\frac{a_k}{M_1}\right)^p\right)^{1/p}+\frac{M_2}{M_1+M_2}
\left(1-\sum_{k=1}^n\left(\frac{b_k}{M_2}\right)^p\right)^{1/p}\\
&\quad\le\left(1-\sum_{k=1}^n\left(\left(1-\frac{M_2}{M_1+M_2}\right)\left(\frac{a_k}{M_1}\right)
+\left(\frac{M_2}{M_1+M_2}\right)\left(\frac{b_k}{M_2}\right)\right)^p\right)^{1/p}\\
&\quad=\frac{1}{M_1+M_2}\left((M_1+M_2)^p-\sum_{k=1}^n(a_k+b_k)^p\right)^{1/p}\,.
\end{align*}
We therefore deduce the desired inequality \eqref{mp1}.\\
(ii)$\Rightarrow$(i) Set $M_1=1-\lambda, M_2=\lambda$, and replace
$a_k$ and $b_k$ by $(1-\lambda)a_k$ and $\lambda b_k$, respectively,
in \eqref{mp1} to get \eqref{mp3}.
\end{proof}

\textbf{Acknowledgement.}{ The third author would like to thank ``Tusi Mathematical Research Group (TMRG)''}


\end{document}